\newtheorem{definition}{Definition}
\newtheorem{theorem}{Theorem}
\newtheorem{proof}{Proof}
\newtheorem{remark}{Remark}
\numberwithin{equation}{subsection}
\title{\textbf{Causal Network Condensation}}
\author[1]{Lu Xuexing\thanks{xxlu@mail.ustc.edu.cn}}
\author[1]{Tang Ziqian\thanks{tangziqian@pku.edu.cn}}
\affil[1]{School of Mathematical Sciences, Zaozhuang University, Zaozhuang, P. R. China}
\affil[1]{School of Physics, Peking University, No.5 Yiheyuan Rd, Beijing 100871, P. R. China}
\begin{document}
\maketitle

\begin{abstract}
In this paper, we introduce a generalized topological quantum field theory based on the symmetric monoidal category, which we call causal network condensation since it can be regarded as a generalization of Baez's spin network construction, Turaev-Viro model and causal set theory. In this paper we introduce some new concepts, including causal network category, causal network diagrams, and their gauge transformations. Based on these concepts, we introduce the symmetric monoidal nerve functor for the symmetric monoidal category. It can be regarded as a generalization of the nerve functor for simplicial category. 
\end{abstract}

\section{Introduction}
John C. Baez [1] introduced the notion of generalized measures in gauge theory and spin network states as attempts to get rid of the difficulty of defining the measure on the space $\mathcal{A}/\mathcal{G}$ of connections modulo gauge transformations and to give the concept of quantum 3-geometries in loop quantum gravity. In his construction, he begins by studying gauge theory on a discrete directed graph $\phi=(V,E,s,t)$, defining connection and gauge transformations on $\phi$ and then the space $\mathcal{A}_{\phi}/\mathcal{G}_{\phi}$. Due to the discrete nature of the graph $\phi$, $\mathcal{A}_{\phi}/\mathcal{G}_{\phi}$ is isomorphic to a finite dimensional compact lie group and thus have a well-defined Haar measure. Applying Peter-Weyl theorem for it, we shall find a set of bases for $L^{2}(\mathcal{A}_{\phi}/\mathcal{G}_{\phi})$, each of which can be understood as a spin network on $\phi$. He then defines the embedding of the discrete graph $\phi$ to a given target manifold $M$. All graph embedding in $M$ and their refinement form a category. Since for each graph $\phi$ there is a Hilbert space spanned by all of its spin networks, we can take the projective limit to define $L^{2}(\mathcal{A}/\mathcal{G})$ on $M$ by its functoriality in a sense of generalized measure.
\\\\
In the usual content of LQG, we will then introduce so-called spin foams[3] as a description to quantum 4-geometries. Just as a spin network is a graph with edges labelled by representations and vertices labelled by intertwining operators, a spin foam is a 2-dimensional complex with faces labeled by representations and edges labelled by intertwining operators. Spin foams arise naturally as higher-dimensional analogues of Feynman diagrams foliated by spin networks in quantum gravity. In this sense, LQG becomes a extended topological quantum field theory similar to the Turaev-Viro model. And the spin foam category is analogous to the cobordism category.
\\\\
In this paper, we take a different route to understanding quantum 4-geometry than that of loop quantum gravity, inspired by the Ehlers-Pirani-Schild approach and causal set theory, with causal structure as its fundamental dynamical degree of freedom, but with features of both of these theories. To begin with, let recall the Ehlers-Pirani-Schild approach briefly.
\\\\
In early 70s, EPS[4][5] started from a set of well motivated physical properties of light rays and matter worldlines in a relativistic framework to derive the geometric structure of space-time from potentially observable objects. It is particularly suitable to discuss which geometric structures are observable and which are conventional. To be precise, EPS shows that one can obtain a class of Lorentzian metrics from lightrays. In this approach, spacetime is in a secondary position and should be derived from the matching of causal and projective structures as an emergence phenomenon.
\\\\
Similarly, for the theory of causal sets[6]. Its founding principles are that spacetime is formed by causal structure i.e. spacetime events are related by a partial order. This partial order has the physical meaning of the causality relations between spacetime events.
\\\\
This gives us a strong implication that we need to take causal structure as our fundamental degree of freedom rather than the holonomies in loop quantum gravity. But this is not enough; instead understanding spacetime volume as a naive probability measure as in causal set theory, we need a rich and robust algebraic structure to colour our causal structure to give both the space-time volume and the standard model of particle physics. Based on this, we will choose a symmetric monoidal category to colour the causal structures, as in the case of the $SU(2)$-spin network in loop quantum gravity, which will provide the appropriate richness and a robust mathematical structure for our definition of quantum spacetime consistent with the EPS approach.
\\\\
The plan of the paper is as follows. 
\\\\In Section 2 we give a precise definition of $\textit{causal network category}$ . It turns out that there is a category whose objects are causal relations (as a acyclic directed graphs and we call it causal networks) and whose morphisms are refinement and coarse-graining of causal relations (as functors between path category of graphs). It is show that any morphism in the causal network category can be decomposed as a composite of a series of elementary operations which we called $\textit{elementary morphism}$.
\\\\In Section 3, We will briefly review the concept of symmetric monoidal categories and define valuation on causal networks, which are similar to spin valuation for spin networks. Causal networks together with specific valuations are called $\textit{causal diagrams}$, and in order to make the functionality of them, we will introduce the notion of $\textit{gauge equivalence}$ of causal diagrams. 
\\\\In Section 4 We will explore its relation to the Baez construction and the nerve theorem and give a monoidal categorical version of the nerve theorem.
\\\\Section 5 is devoted to discussion and conclusions.
\section{The Category of Causal Networks}
The category of causal networks attempts to capture the most salient features of causal structure, with transformations between different causal networks analogous to the transformation of reference frame of observers. Based on this, we define a causal network as a directed acyclic graph, and a morphism between causal networks is defined as a refinement or coarse graining of the graph. The precise definition  is as follows. 
\begin{definition}
Causal network category is a category with whose objects are acyclic directed graphs (causal networks) $\phi = (V,E,s,t)$ and whose morphisms are functors between path category (free category) of directed graphs $f:F(\phi)\rightarrow F(\phi')$. The path category of directed graph $\phi$ is denoted by $F(\phi)$ and the causal network category is denoted by $\mathsf{Cau}$.
\end{definition}
\begin{definition}
A morphism $f$ in $\mathsf{Cau}$ is called $\textit{elementary morphism}$ if it can be induced by one of the following operations of directed graphs or isometry between directed graphs:\\
1. Adding an vertex $v$ to a directed graph $\phi$\\
2. Adding an edge e between two vertices $v_1 ,v_2$ of a directed graph $\phi$\\
3. Subdividing an edge $e$ of a directed graph $\phi$ by adding a vertex $v$ in the middle of it.\\
4. Coarse-graining of edges. For a set of edges $e_1 ,..., e_n$ in $\phi$ with a common source and target i.e. $s(e_1)=...=s(e_n),t(e_1)=...=t(e_n)$ we merge them into a single edge $e$.\\
5.Coarse-graining of vertices. For a subset $S$ of vertices of $\phi$, we denote the induced subgraph of $S$ in $\phi$ by $\phi (S)$. We shrink $\phi (S)$ to a single point.
\end{definition}
By "induced by one of the following operations of directed graphs" we mean that one of the following operations determines a unique functor between path category of the origin graph and the changed one. Since a functor between path category of directed graphs is determined by its image for each edge we can determine the functor by listing its image of edges.
\\\\
For isometry between directed graphs, this maps edges to corresponding edges determined by isomorphism. For operation 1 or 2, it is an inclusion map of edges. For operation 3, it maps edges to corresponding edges except the one being subdivided and for this one it maps to the path after subdividing. For operation 4, it maps the edges being merged to the merged one. For operation 5, it maps the edges and vertices to the shrinked point. It is shown that any morphism in the causal network category can be decomposed as a composite of a series of elementary morphism and we have the following theorem.
\\
\begin{theorem}
Given two causal networks $\psi$ and $\phi$ in $\mathsf{Cau}$, $f$ is a morphism between them, then $f$ can be decomposed into a composition of a series of elementary morphisms, i.e. $f=f_n \circ ... \circ f_1$ for some elementary morphisms $f_n \circ ... \circ f_1$
\end{theorem}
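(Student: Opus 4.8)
The plan is to peel off the data of a functor $f : F(\phi) \to F(\phi')$ in four successive stages, realizing $f$ as a composite of elementary morphisms of types $3$, $5$, $4$, and finally $1$--$2$ (together with an isomorphism). Throughout I use the fact, already recorded in the excerpt, that a functor out of a free category $F(\phi)$ is the same thing as a graph morphism from $\phi$ into the underlying graph of $F(\phi')$: concretely, $f$ is determined by a vertex map $V(\phi)\to V(\phi')$ together with, for each edge $e$ of $\phi$, a path $f(e)$ in $\phi'$ from $f(s(e))$ to $f(t(e))$. I will also use repeatedly that $\phi'$ is acyclic, which is exactly what makes the collapsing step legitimate.

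First I would \emph{subdivide}. For each edge $e$ of $\phi$ whose image $f(e)$ is a path of length $\ell_e \ge 2$, apply operation $3$ a total of $\ell_e - 1$ times, introducing the intermediate vertices of $f(e)$; call the result $\phi_1$ and let $s : \phi \to \phi_1$ be the resulting composite of subdivisions. Sending each new segment to the corresponding single edge of $f(e)$ defines a functor $f_1 : F(\phi_1) \to F(\phi')$ with $f = f_1 \circ s$, and now every edge of $\phi_1$ is sent by $f_1$ either to a single edge of $\phi'$ or, when $\ell_e = 0$, to an identity.

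Next I would \emph{collapse vertices}. Partition $V(\phi_1)$ into the fibers of $f_1$ and apply operation $5$ to the induced subgraph on each fiber. The crucial point, and the step I expect to be the main obstacle, is verifying that this is well-defined and stays inside $\mathsf{Cau}$: because $\phi'$ is acyclic, any edge with both endpoints in a single fiber $S$ (mapping to a vertex $w$) has image a path $w \to \cdots \to w$, hence the identity, so operation $5$ collapses exactly the edges that $f_1$ already sends to identities; and any vertex lying on a path between two points of $S$ also maps to $w$, so $S$ is convex and the collapse preserves acyclicity. This yields $c : \phi_1 \to \phi_2$ with $\phi_2 \in \mathsf{Cau}$, a factorization $f_1 = f_2 \circ c$, and a functor $f_2$ that is injective on vertices and sends every surviving edge to a genuine edge of $\phi'$.

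Then I would \emph{merge edges}. Since $f_2$ is injective on vertices, any two edges of $\phi_2$ with the same image $e'$ have equal sources and equal targets, i.e.\ are genuinely parallel, so operation $4$ applies; merging each such family gives $m : \phi_2 \to \phi_3$ with $f_2 = f_3 \circ m$ and $f_3$ an injective graph morphism, that is, an embedding of $\phi_3$ as a subgraph of $\phi'$. Finally I would \emph{include}: up to the isomorphism identifying $\phi_3$ with its image, the remaining vertices and edges of $\phi'$ are adjoined one at a time by operations $1$ and $2$, each intermediate graph being a subgraph of the acyclic $\phi'$ and hence acyclic. The composite of these inclusions realizes $f_3$, and assembling the four stages yields $f = f_3 \circ m \circ c \circ s$ as a composite of elementary morphisms, as required.
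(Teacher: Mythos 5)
Your proposal is correct and follows essentially the same route as the paper's proof: subdivide images that are longer paths (operation 3), collapse the fibers of the vertex map (operation 5), merge parallel edges with equal images (operation 4), and then realize the resulting subgraph inclusion into $\phi'$ by adjoining the remaining vertices and edges (operations 1--2 plus an isomorphism). Your convexity argument for why the vertex collapse stays acyclic is a cleaner version of the paper's preliminary contradiction argument, but the decomposition itself is identical.
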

To proof this, we shall notice first that if $\phi$ is a acyclic then it will remain acyclic after shrink some of its disjoint subgraphs. This can be proved by contradiction. Suppose $\phi$ produces a loop after shrinking one of its subgraphs $G_i$ to a point $p_i$ , then there are two possibilities, the first
is the loop does not include the point $p_i$ that $G_i$ becomes after shrinking which implies that there existed a loop before shrinking, which contradicts the fact that there was no loop before shrinking. The second one is that the loop contains the point $p_i$ that $G_i$ becomes after its shrinking, then since there is ultimately acyclic, this loop must have been shrunk afterwards, so as $p_i$ which implies $V_i \cap V_j \ne \emptyset$ for some $i,j$ contradicts the fact that those $V_i$ are disjoint.
\begin{proof}
Given $f:F(\psi)\rightarrow F(\phi)$,we prove it by a direct construction of decomposition of $f$ into a series of elementary morphisms $f_i$. 
\\
First, we check the image $f(e)$ of each edge $e$ of $\psi$ under $f$. If $f(e)$ is a path with intermediate nodes. We pull these nodes back on $\psi$ This corresponds to a number of operations of subdivision. We will notate the diagram after the operation as $\psi'$.
\\
Then we check the image $f(v)$ of each vertex $v$ of $\psi'$ under $f$. If the image of several vertices is the same. We will collect them into a group. Repeat the above operation to divide the vertex set $V'$ of $\psi'$ into a number of non-intersecting sums of subsets of vertices $G_1,...,G_n$. We shrink these $G_i (i=1,...,n)$ one by one, which is equivalent to performing a number of "coarse-graining of vertices". We will notate the diagram after the operation as $\psi''$.
\\
We check the image $f(e)$ of each edge $e$ remains in $\psi''$ of $\psi'$(and also $\psi$), If some of them merge as an edge, we merge them as well which is equivalent to performing a number of "coarse-graining of edges". We will notate the diagram after the operation as $\psi'''$.
\\
$\psi'''$ can be seemed as a inclusion of $\phi$ and we add these additional edges and points by several operation 1,2,3.
\\
This process can be summarised in the following diagram (The five kinds of operations will be denoted by their number above).
\begin{center}
\begin{tikzcd}
\psi \arrow[r, "3"] & \psi' \arrow[r, "5"] & \psi'' \arrow[r, "4"] & \psi''' \arrow[r, "123"] & \phi
\end{tikzcd}
\end{center}
\end{proof}
It is easy to verify that the above operations are well defined.
\\
\begin{remark}
It is worth noting that the causal network category $\mathsf{Cau}$ is a generalization of the category of posets $\mathsf{Pos}$. For any partial order set $(S,\leq)$, there is a Hasse diagram for it. If one represents the edges of the poset's Hasse diagram as arcs in the upward direction, and take the transitive closure of the resulting (acyclic) digraph, he gets the directed acyclic graph $\psi(S,\leq)$ of the poset. Also, any isotone map $f$ between two partial order set $(S,\leq)$ and $(S',\leq)$ induces a functor $F(f)$ between the path categorys of $\psi(S,\leq)$ and $\psi(S',\leq)$. This defines an inclusion $i:\mathsf{Pos}\hookrightarrow \mathsf{Cau}$. Conversely, any directed acyclic graph $\psi$ defines a poset with its vertices and directed paths. Therefore, it is not hard to verify that $\mathsf{Pos}$ is a reflective subcategory of $\mathsf{Cau}$.
\end{remark}
\section{Causal digrams and their gauge equivalence}
\begin{definition}
Let$(\mathcal{V},\otimes,I,\alpha,l,r,B)$be a strict symmetric monoidal category and $\psi=(V,E,s,t)$ an acyclic directed graph. A causal diagram of $\psi$ in $\mathcal{V}$ is a triple $(\mathfrak{p},\mathfrak{v})$ including the following data.
\\\\
1.polarization $\mathfrak{p}$: For each vertex $x\in V$, assign a full order $\mathfrak{p}_{{\rm in},x}:[1,|{\rm in}(x)|] \xrightarrow{\sim} {\rm in}(x) ,\mathfrak{p}_{{\rm out},x}:[1,|{\rm out}(x)|] \xrightarrow{\sim} {\rm out}(x) $ to the input ${\rm in}(x):=t^{-1}(x) $ and output ${\rm in}(x):=s^{-1}(x)$ of $x$, respectively.
\\\\
2.valuation $\mathfrak{v}$: It contains two map $\mathfrak{v}_{0}:E \rightarrow {\rm obj}\mathcal{V},\mathfrak{v}_{1}:V \rightarrow {\rm mor}\mathcal{V}$ which are called edge valuation and vertex valuation respectively. 
\\\\
Edge valuation assigns each edge $e\in E$ an object $\mathfrak{v}_0 (e)\in {\rm mor}\mathcal{V}$ while vertex valuation assigns each vertex $x\in \mathcal{V}$a morphism $\otimes_{k=1}^{|{\rm in}(x)|}\mathfrak{p}_{{\rm in},x}(k)\xrightarrow{\mathfrak{v}_{1}(x)} \otimes_{k=1}^{|{\rm out}(x)|}\mathfrak{p}_{{\rm out},x}(k) $. We specify ${\rm dom}\mathfrak{v}_{1}(x)$ or ${\rm cod}\mathfrak{v}_{1}(x)$ as unit object $I$ in $\mathcal{V}$ when ${\rm in}(x)$ or ${\rm out}(x)$are empty sets.
\end{definition}
\begin{definition}
Let $(\mathfrak{p},\mathfrak{v})$ and $(\mathfrak{p}',\mathfrak{v}')$ be two causal diagrams of $\psi$ in $\mathcal{V}$. We say that the two diagrams are gauge equivalent if for each edge $e$ of $\psi$ there exists a isomorphism $f_e$ from $\mathfrak{v}_{0}(e)$ to $\mathfrak{v'}_{0}(e)$such that for each vertex $x$ in $\psi$,the following diagram commutes
\begin{equation}
\begin{tikzcd} [column sep = 15em]
\otimes_{k=1}^{|{\rm in}(x)|}\mathfrak{v}_{0}(\mathfrak{p}_{{\rm in},x}(k)) \arrow[r,"\langle\mathfrak{p'}_{{\rm in},x}^{-1}\circ\mathfrak{p}_{{\rm in},x}\rangle\circ (\otimes_{k=1}^{|{\rm in}(x)|}f_{\mathfrak{p}_{{\rm in},x}(k)})"] \arrow[d,"\mathfrak{v}_{1}(x)"]
& \otimes_{k=1}^{|{\rm in}(x)|}\mathfrak{v'}_{0}(\mathfrak{p'}_{{\rm in},x}(k))  \arrow[d, "\mathfrak{v'}_{1}(x)"] \\ \otimes_{k=1}^{|{\rm out}(x)|}\mathfrak{v}_{0}(\mathfrak{p}_{{\rm out},x}(k))\arrow[r, "\langle\mathfrak{p'}_{{\rm out},x}^{-1}\circ\mathfrak{p}_{{\rm out},x}\rangle\circ (\otimes_{k=1}^{|{\rm out}(x)|}f_{\mathfrak{p}_{{\rm out},x}(k)})"]
& \otimes_{k=1}^{|{\rm out}(x)|}\mathfrak{v'}_{0}(\mathfrak{p'}_{{\rm out},x}(k))
\end{tikzcd}
\end{equation}
Similarly, when ${\rm in}(x)$ or ${\rm out}(x)$ is an empty set, we then specify the corresponding horizontal morphism as $I\xrightarrow{{\rm id}_{I}} I$.
\end{definition}
\begin{theorem}
Denote the set of all causal diagrams of $\psi$ in $\mathcal{V}$ by ${\rm CauDiag}(\psi)$, then the gauge equivalence between causal diagrams is an equivalence relation. Let the quotient set of ${\rm CauDiag}(\psi)$ under this equivalence relation be ${\rm CD}(\psi)$
\end{theorem}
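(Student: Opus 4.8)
The plan is to verify the three defining properties of an equivalence relation on ${\rm CauDiag}(\psi)$ directly, the only nontrivial input being the behaviour of the permutation morphisms $\langle\sigma\rangle$ assembled from the symmetry $B$ of $\mathcal{V}$. Two standard consequences of the coherence theorem for symmetric monoidal categories will be used throughout. First, $\langle-\rangle$ is functorial on permutations: $\langle{\rm id}\rangle={\rm id}$ and $\langle\sigma\circ\tau\rangle=\langle\sigma\rangle\circ\langle\tau\rangle$, so each $\langle\sigma\rangle$ is invertible with $\langle\sigma\rangle^{-1}=\langle\sigma^{-1}\rangle$. Second, the symmetry is natural, so for morphisms $g_1,\dots,g_n$ the reindexing law $\langle\sigma\rangle\circ(g_1\otimes\cdots\otimes g_n)=(g_{\sigma^{-1}(1)}\otimes\cdots\otimes g_{\sigma^{-1}(n)})\circ\langle\sigma\rangle$ holds. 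Since every horizontal arrow in the square in the definition of gauge equivalence is a composite of some $\langle\sigma\rangle$ with a tensor product of the isomorphisms $f_e$, each horizontal arrow is itself an isomorphism; this observation is what makes reversing and pasting the squares legitimate.

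For reflexivity I would take $f_e={\rm id}_{\mathfrak{v}_0(e)}$ to compare $(\mathfrak{p},\mathfrak{v})$ with itself. Then $\mathfrak{p}^{-1}\circ\mathfrak{p}={\rm id}$, the tensor product of identities is the identity, and both horizontal arrows reduce to ${\rm id}$, so the square degenerates to $\mathfrak{v}_1(x)=\mathfrak{v}_1(x)$; the empty-input or empty-output vertices collapse to ${\rm id}_I={\rm id}_I$. For symmetry, assume $(\mathfrak{p},\mathfrak{v})\sim(\mathfrak{p}',\mathfrak{v}')$ with witnesses $\{f_e\}$ and propose $\{f_e^{-1}\}$ for the reverse. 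The crux is that the new top arrow $\langle\mathfrak{p}^{-1}\circ\mathfrak{p}'\rangle\circ(\bigotimes_k f^{-1}_{\mathfrak{p}'_{{\rm in},x}(k)})$ coincides with the inverse of the old top arrow: inverting the old arrow by the functoriality of $\langle-\rangle$ together with $(g\otimes h)^{-1}=g^{-1}\otimes h^{-1}$, and then applying the reindexing law to move the tensor of inverses across $\langle\mathfrak{p}^{-1}\circ\mathfrak{p}'\rangle$, shows the two agree (and likewise for the bottom arrows). With all four sides of the reversed square identified as the inverses of the originals, it commutes because the original does.

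For transitivity, suppose $(\mathfrak{p},\mathfrak{v})\sim(\mathfrak{p}',\mathfrak{v}')$ via $\{f_e\}$ and $(\mathfrak{p}',\mathfrak{v}')\sim(\mathfrak{p}'',\mathfrak{v}'')$ via $\{g_e\}$, and propose $\{g_e\circ f_e\}$ as witnesses for $(\mathfrak{p},\mathfrak{v})\sim(\mathfrak{p}'',\mathfrak{v}'')$. Placing the two commuting squares side by side at each vertex $x$, I would show that the composite of the two top arrows equals the prescribed top arrow $\langle{\mathfrak{p}''}^{-1}\circ\mathfrak{p}\rangle\circ(\bigotimes_k (g_{\mathfrak{p}_{{\rm in},x}(k)}\circ f_{\mathfrak{p}_{{\rm in},x}(k)}))$: the reindexing law lets one slide $\bigotimes_k g$ past $\langle{\mathfrak{p}'}^{-1}\circ\mathfrak{p}\rangle$, after which functoriality collapses $\langle{\mathfrak{p}''}^{-1}\circ\mathfrak{p}'\rangle\circ\langle{\mathfrak{p}'}^{-1}\circ\mathfrak{p}\rangle$ to $\langle{\mathfrak{p}''}^{-1}\circ\mathfrak{p}\rangle$. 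The same holds for the bottom arrows, so the outer rectangle commutes by pasting and $\{g_e\circ f_e\}$ witnesses the equivalence.

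I expect the main obstacle to be the bookkeeping in the symmetry and transitivity steps, specifically keeping straight how the reindexing in the naturality law interacts with the permutation ${\mathfrak{p}'}^{-1}\circ\mathfrak{p}$ once the isomorphisms $f_e$ are labelled by edges rather than by tensor positions. Fixing the convention that $f_{\mathfrak{p}_{{\rm in},x}(k)}$ occupies the $k$-th tensor slot turns this into a routine, if slightly tedious, application of coherence, with no genuinely new idea required; the empty-edge cases are absorbed uniformly by the stated convention that the relevant arrow is ${\rm id}_I$.
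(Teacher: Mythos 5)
Your proposal is correct and follows the same route as the paper's own proof: reflexivity via $f_e={\rm id}_{\mathfrak{v}_0(e)}$, symmetry via $f_e^{-1}$, and transitivity via $f'_e\circ f_e$. The only difference is that you spell out the coherence/naturality bookkeeping for the permutation morphisms $\langle\sigma\rangle$ that the paper dismisses as ``easy to verify,'' and your computations there are accurate.
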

\begin{proof}
For two identical causal diagrams $(\mathfrak{p},\mathfrak{v})$, the $f_e$ is the identity morphism ${\rm id}_{\mathfrak{v}_0 (e)}$. For two causal diagrams $(\mathfrak{p},\mathfrak{v})$ and $(\mathfrak{p}',\mathfrak{v}')$ if $(\mathfrak{p},\mathfrak{v})$ is gauge equivalent to $(\mathfrak{p}',\mathfrak{v}')$ by $f_e$ it is easy to verify that $(\mathfrak{p}',\mathfrak{v}')$ is gauge equivalent to $(\mathfrak{p},\mathfrak{v})$ by $f^{-1}_e$. If for three causal diagrams $(\mathfrak{p},\mathfrak{v})$,$(\mathfrak{p}',\mathfrak{v}')$,$(\mathfrak{p}'',\mathfrak{v}'')$, $(\mathfrak{p},\mathfrak{v})$ is gauge equivalent to $(\mathfrak{p}',\mathfrak{v}')$ by $f_e$ and $(\mathfrak{p'},\mathfrak{v'})$ is gauge equivalent to $(\mathfrak{p}'',\mathfrak{v}'')$ by $f'_e$ it is easy to verify that $(\mathfrak{p},\mathfrak{v})$ is gauge equivalent to $(\mathfrak{p}'',\mathfrak{v}'')$ by $f'_e \circ f_e$.
\end{proof}
\section{Symmetric Monoidal Nerve Functor}
\subsection{Value of a valuation}
Joyal and Street [7] defined the value of a valuation on a polarised progressive graph (PPG) in a symmetric monoidal category $\mathcal{V}$. The data required to define the value of a valuation are as follows:
\\\\
1. A polarised progressive graph $\Gamma$ with ${\rm dom}$ and ${\rm cod}$;\\
2. A valuation of $\Gamma$ in $\mathcal{V}$;\\
3. Linear order on ${\rm dom}$ and ${\rm cod}$.
\\\\
Given the data above we can define a morphsim $\mathfrak{v}(\Gamma)\in{\rm mor}\mathcal{V}$, called the value of this valuation. We claim that, in our setting, a causal network $\psi=(V,E,s,t)$together with a causal diagram $(\mathfrak{p},\mathfrak{v})$ and a subset $S\subset V$ define a PPG with a valuation in the sense of [7] with whose set of edges, set of innner nodes, ${\rm dom}$ and ${\rm cod}$ are $\{e\in E|s(e)\in S\ {\rm or}\  t(e)\in S\},S,{\rm dom}=\{e\in E|s(e)\ne S,t(e)\in S\},{\rm cod}=\{e\in E|s(e)\in S,t(e)\ne S\}$ respectively and whose orientation,polarization and valuation inherits that of $\psi$ and $(\mathfrak{p},\mathfrak{v})$.Further, fixing two linear order $\mathfrak{q}_{\rm dom},\mathfrak{q}_{\rm cod}$ on ${\rm dom}$ and ${\rm cod}$ gives the value in [7] which we denote it as $\mathfrak{v}(\psi,\mathfrak{p},\mathfrak{v},S,\mathfrak{q}_{\rm dom},\mathfrak{q}_{\rm cod})$.In particular, if ${\rm dom}$ or ${\rm cod}$ is the empty set, we specify that the domain or codomain of the value is the unit object $I$ in $\mathcal{V}$.
\subsection{Symmetric monoidal nerve of a symmetric monoidal category}
Let $f$ be an elementary morphism between two casual network $\psi$ and $\psi'$. Then $f$ can induce a map $F[f]$ from the set ${\rm CD}(\psi)$ to ${\rm CD}(\psi')$ as follows.
\\\\1.If $\psi$ is isomorphic to $\psi'$, then for any causal diagram $(\mathfrak{p},\mathfrak{v})$ on $\psi$, we can pass the polarization and valuation on $\psi$ to $\psi'$ by isomorphism to give a causal diagram $(\mathfrak{p}',\mathfrak{v}')$ on $\psi'$.
\\\\2.If $\psi'$ can be obtained by adding a vertex $x$ to $\psi$, then for any causal diagram $(\mathfrak{p},\mathfrak{v})$ on $\psi$, the causal diagram $(\mathfrak{p}',\mathfrak{v}')$ on $\psi'$ is identical to $(\mathfrak{p},\mathfrak{v})$, except that the point $x$ is assigned with $I\xrightarrow{{\rm id}_{I}} I$.
\\\\3.If $\psi'$ can be obtained by adding a directed edge $e$ from two vertices $x_{1}$ and $x_{2}$ of $\psi$. For any $[(\mathfrak{p},\mathfrak{v})]\in{\rm CD}(\psi)$, we specify that $[(\mathfrak{p'},\mathfrak{v'})]\in{\rm CD}(\psi')$ is a causal diagram where $(\mathfrak{p'},\mathfrak{v'})$ keep the polarization unchanged except for ${\rm out}(x_{1}),{\rm in}(x_{2})$. We add $e$ to ${\rm out}(x_{1}),{\rm in}(x_{2})$ as its minimal element, leaving the order of the remaining elements unchanged and assign $I$ to $e$.This is well defined since gauge equivalence.
\\\\4.If $\psi'$ can be obtained by adding a vertex $x$ to a directed edge $e$ of $\psi$ so that it divides into two directed edges $e_{1}$ and $e_{2}$, then for any causal diagram $(\mathfrak{p},\mathfrak{v})$ on $\psi$,the causal diagram $(\mathfrak{p}',\mathfrak{v}')$ on $\psi'$ is identical to $(\mathfrak{p},\mathfrak{v})$($e_1$ and $e_2$ take the place of $e$ in the linear order), except that the edges $e_{1}$ and $e_{2}$ and the vertex $x$. We assign the data on the original edge $e$ to edges $e_1,e_2$ and ${\rm id}_{\mathfrak{v}_0 (e)}$to vertex $x$.
\\\\5.If one can obtain $\psi'$ by merge several directed edges $e_{1},...,e_{n}$ with a common source $x_{1}$ and target $x_{2}$ into a single directed edge $e$ pointing from $x_{1}$ to $x_{2}$. Then for any $[(\mathfrak{p},\mathfrak{v})]\in{\rm CD}(\psi)$, we pick a representative causal diagram $(\mathfrak{p},\mathfrak{v})$ and specify that $[(\mathfrak{p'},\mathfrak{v'})]\in{\rm CD}(\psi')$ is a causal diagram identical to $[(\mathfrak{p},\mathfrak{v})]\in{\rm CD}(\psi)$ except that the edges $e_{1},...,e_{n}$. Data on the edge $e$ is the tensor product of $\mathfrak{v}_{0}(e_1),...,\mathfrak{v}_{0}(e_n)$ (after appropriate permutation such that $e_1,...,e_n$ are adjacent to each other and of the same order on both sides). This is well defined since gauge equivalence.
\\\\6.If $\psi'$ can be obtained by shrinking the induced subgraph $\psi(S)$ of a subset $S$ of vertices of $\psi$ to a point $x$, then for any causal diagram $(\mathfrak{p},\mathfrak{v})$ on $\psi$ the causal diagram $(\mathfrak{p}',\mathfrak{v}')$ on $\psi'$ is identical to $(\mathfrak{p},\mathfrak{v})$ except the data on the induced subgraph $\psi(S)$.We assign $\mathfrak{v}(\psi,\mathfrak{p},\mathfrak{v},S,\mathfrak{q}_{\rm dom},\mathfrak{q}_{\rm cod})$ to $x$. It should be noticed that $\mathfrak{q}_{\rm dom},\mathfrak{q}_{\rm cod}$ can be specified arbitrarily since gauge equivalence.
\\\\
By Theorem 1, for any two acyclic directed graphs $\psi,\phi$, a morphism $f$ between them can be decomposed into a composite of several elementary morphisms $f_1,...,f_n$. Together with the above discussion, $f$ defines a map from set ${\rm CD}(\psi)$ to ${\rm CD}(\phi)$. This is obviously independent of the way $f$ is decomposed and hence functorial. We conclude these as the following.
\begin{definition}
The operations above defines a functor $p\mathcal{SN}[\mathcal{V}]:\mathsf{Cau}\rightarrow \mathsf{Set}$ which we called the $\textit{Pre symmetric monoidal nerve functor}$ of the symmetric monoidal category $\mathcal{V}$. Furthermore, it defines a functor $p\mathcal{SN}:\mathsf{SymMon}\rightarrow \mathsf{CoPsh}(\mathsf{Cau})$ from the category of symmetric monoidal categories to the category of presheaves over $\mathsf{Cau}$.
\end{definition}
However the functor $p\mathcal{SN}:\mathsf{SymMon}\rightarrow \mathsf{CoPsh}(\mathsf{Cau})$ is not fully-faith. We consider that there exists a unique fully-faith functor $\mathcal{SN}:\mathsf{SymMon}\rightarrow \mathsf{CoPsh}(\mathsf{Cau})$ satisfies certain rigid conditions such that $p\mathcal{SN}$ acts as its subfunctor and we call it the $\textit{symmetric monoidal nerve functor}$ . We assume in the following that $\mathcal{SN}$ exists.
\begin{remark}
The construction of symmetric monoidal nerve functor is motivated by the notion of the nerve $N[\mathcal{C}]$ of a category $\mathcal{C}$ which is a geometric realization of categories. Analogously to this, we note that the monoidal product $\otimes$ of the monoidal category has a dual relationship with the graphical calculus of the string diagram, so the graphical calculus can be used to test the nonlinear structure within the monoidal category. For this reason, we introduce the symmetric monoidal nerve functor, which is a natural geometric realization of monoidal category.
\end{remark}
\section{Casual Network Condensation}
\subsection{Category of paths}
The following requires the categorification of some geometric objects (manifolds or directed graphs). The basic idea is to identify the corresponding geometrical objects to their "category of paths".
\\\\
Given a smooth manifold $M$, let $\mathcal{P}(M)$ be the path groupoid of $M$. Given two smooth manifold $M$ and $M'$, a smooth diffeomorphism $f:M\rightarrow M'$ defines a functor $\mathcal{P}(f):\mathcal{P}(M)\rightarrow \mathcal{P}(M')$ sending each point $p\in M$ to $f(p)\in M'$ and each path $\gamma: [0,1]\rightarrow M$ with $\gamma [0]=p, \gamma [1]=q$ to $f\circ\gamma:[0,1]\rightarrow M'$. This defines a functor $\iota_1:\mathsf{SmthMfd}\rightarrow \mathsf{Cat}$ from the category of smooth manifold to the category small categories sending each smooth manifold $M$ to its path groupoid $\mathcal{P}(M)$ and each smooth diffeomorphism $f$ to $\mathcal{P}(f)$.
\\\\
Let $\mathsf{dGrph}$ be the category of directed graphs with whose objects are directed graphs and whose morphisms are inclusion of graphs defined in [2]. Each directed graph $\psi$ in $\mathsf{dGrph}$ can be identified with a category $\mathcal{P}(\psi)$ with whose objects are vertices of $\psi$ and whose morphism are undirected paths connecting vertices. Further, an inclusion $i:\psi\rightarrow\phi$ defines a functor $\mathcal{P}(i):\mathcal{P}(\psi)\rightarrow\mathcal{P}(\phi)$ by sending vertices and paths in $\mathcal{P}(\psi)$ to those in $\mathcal{P}(\phi)$ defined by $i$. This defines a functor $\iota_2:\mathsf{dGrph}\rightarrow \mathsf{Cat}$ from the category of smooth manifold to the category small categories sending each directed graph $\psi$ to $\mathcal{P}(\psi)$  and each inclusion $i$ to $\mathcal{P}(i)$.
\\\\
Finally, we can define a functor $\iota_3 :\mathsf{Cau}\rightarrow\mathsf{Cat}$ sending each directed graph $\psi$ to its path category $F(\psi)$ and each morphism $F(f)$ in $\mathsf{Cau}$ to itself.
\begin{remark}
It is worth noting that $\mathsf{dGrph}$ differs from $\mathsf{Cau}$ in that the former's graph inclusion contains direction reversal of edges whereas the latter does not. This corresponds to the intuition that the order of cause and effect cannot be reversed.
\end{remark}
\subsection{Casual Network Condensation}
Baez's generalized measure [2] on the space $L^2 (\mathcal{A} / \mathcal{G})$ with a base manifold $M$ and gauge group $G$ can be described by the following commutative diagram
\\
\begin{center}
\begin{tikzcd}[column sep=scriptsize]
\mathsf{dGrph} \arrow[dr,"\iota_2"] \arrow[rr, "\quad"{name=U, below},"\mathcal{S}"]{}
    & & \mathsf{Hilb}  \\
& \mathsf{Cat} \arrow[ur,dashed,"{\rm Lan}_{\iota_2}\mathcal{S}"']\arrow[Rightarrow, from=U, "\eta"] \\
& \mathsf{SmthMfd} \arrow[u,"\iota_1"]
\end{tikzcd}
\end{center}.
\\
where $\mathcal{S}$ is the functor the send each $\psi$ in $\mathsf{dGrph}$ to the space $L^2(\mathcal{A}_\psi /\mathcal{G}_\psi)$ with respect to the canonical measure on the space $\mathcal{A_\psi}/\mathcal{G_\psi}$ of smooth connections modulo gauge transformations on a directed graph $\psi$. Given a manifold $M$, the Hibert space $L^2(A/G)$ with respect to the canonical generalized measure on the space $\mathcal{A}/\mathcal{G}$ of smooth connections on $P$ modulo gauge transformations defined in [2] is exactly ${\rm Lan}_{\iota_2}\mathcal{S}\circ\iota_1(M)$.This reminds us of the formal structure of factorization algebra[8] as follows
\begin{center}
\begin{tikzcd}
\mathcal{D}\mathsf{isk}^\sqcup _{n,{\rm or}} \arrow[d, "\iota"', hook] \arrow[r, "A"] & \mathcal{C}^{\otimes} \\
\mathcal{M}\mathsf{fld}^\sqcup _{n,{\rm or}} \arrow[ru, "\int A"', dotted]                &     
\end{tikzcd}
\end{center}
It extends a local algebra of observables to a global algebra of observables by an Kan extension as well. In our case, the generalization is straightforward. Given a symmetric monoidal category $\mathcal{V}$ the disk algebra $\mathcal{D}\mathsf{isk}^\sqcup _{n,{\rm or}}$ is replaced by the category $\mathsf{Cau}$ which can be seem as a generalization of "open set" and the functor $A$ is replaced by the functor ${\rm Free} \circ \mathcal{SN}[\mathcal{V}]$ where ${\rm Free}:\mathsf{Set}\rightarrow \mathsf{Vect}$ is the functor of free generation and $\mathcal{SN}[\mathcal{V}]$ is the symmetric monoidal nerve Functor mentioned above. The image of it can be seemed as a generalization of "the set of functions on an open set". We call the following commutative diagram as "Causal Network condensation"
\\
\begin{center}
\begin{tikzcd}[column sep=scriptsize]
\mathsf{Cau} \arrow[dr,"\iota_3"] \arrow[rr, "\quad"{name=U, below},"\mathcal{SN}{\rm [}\mathcal{V}{\rm ]}"]{}
    & & \mathsf{Set}\\
& \mathsf{Cat} \arrow[ur,dashed,"{\rm Lan}_{\iota_3}\mathcal{SN}{\rm [}\mathcal{V}{\rm ]}"']\arrow[Rightarrow, from=U, "\eta'"] \\
& \mathsf{SmthMfd} \arrow[u,"\iota_1"]
\end{tikzcd}
\end{center}.
\\
The construction is exactly parallel to that of the factorization algebra and can be seen as a new type of homology. In physics, it generalized the construction of spin network and gives a quantum measure with universal applicability.
\section{Conclusions}
We first define the category of causal networks $\mathsf{Cau}$ and the notion of causal diagrams on it, and we show that the deformation of a causal network can naturally induce the deformation of a causal diagram on it and hence functorial. Further, it can give a construction parallel to the construction of factorization homology as they can both be described by Kan extension This can be seen as a quantum generalization of manifold or a generalized homology.
\\
In the framework of Causal network condensation. the concept of topological structure is replaced by causal networks and ring of functions is replaced by causal diagrams while coordinate transformation is replaced by the functor $\mathcal{SN}[\mathcal{V}]$.

\bibliographystyle{alpha}
\bibliography{sample}

\end{document}